\documentclass{article}
\usepackage{amsmath}
\usepackage{amsthm}
\usepackage{amsfonts}
\newtheorem{theorem}{Theorem}

\begin{document}
\title{Mehler-Heine asymptotics for multiple orthogonal polynomials 
\thanks{This research was supported by KU Leuven Research Grant OT/12/073, FWO Research Grant G.0934.13
and the Belgian Interuniversity Attraction Poles Programme P7/18.}}
\author{Walter Van Assche}
\maketitle
\begin{abstract}
Mehler-Heine asymptotics describe the behavior of orthogonal polynomials near the edges of the interval where
the orthogonality measure is supported. For Jacobi polynomials and Laguerre polynomials this asymptotic behavior near the hard edge involves Bessel functions $J_\alpha$. We show that the asymptotic behavior near the endpoint of
the interval of (one of) the measures for multiple orthogonal polynomials involves a generalization of the Bessel
function. The multiple orthogonal polynomials considered are Jacobi-Angelesco polynomials, Jacobi-Pi\~neiro polynomials, multiple Laguerre polynomials,
multiple orthogonal polynomials associated with modified Bessel functions (of the first and second kind), and multiple orthogonal polynomials associated with Meijer $G$-functions.  
\end{abstract}

\section{Introduction}
Mehler (1868) and Heine (1878) obtained the asymptotic behavior of Legendre polynomials near the endpoint of their interval of orthogonality:
\[  \lim_{n \to \infty} P_n\left( \cos \frac{z}{n} \right) 
  = \lim_{n \to \infty} P_n\left( 1 - \frac{z^2}{2n^2} \right)
  = J_0(z), \]
where $J_0$ is the Bessel function of order $0$. This result can easily be extended to Jacobi polynomials
\cite[Thm.~8.1.1]{Szego}
\[  \lim_{n \to \infty} n^{-\alpha} P_n^{(\alpha,\beta)} \left( \cos \frac{z}{n} \right) 
  = \lim_{n \to \infty} n^{-\alpha} P_n^{(\alpha,\beta)}\left( 1 - \frac{z^2}{2n^2} \right) = (z/2)^{-\alpha} J_\alpha(z), \]
where $J_\alpha$ is the Bessel function of order $\alpha$. Both asymptotic formulas hold uniformly on every compact subset of the complex plane.  
This asymptotic behavior near the so-called hard edge gives as an important consequence detailed information of the zeros of Jacobi polynomials near the endpoint $1$: if $x_{k,n}=\cos \theta_{k,n}$ $(1 \leq k \leq n)$ are the zeros of the Jacobi polynomial $P_n^{(\alpha,\beta)}$ in decreasing order, then
for fixed $k$ one has
\[   \lim_{n \to \infty} n \theta_{k,n} = j_k,  \]
where $j_k$ is the $k$th positive zero of $J_\alpha$. The asymptotic behavior near the other endpoint $-1$ can easily be obtained using the relation
$P_n^{(\alpha,\beta)}(-x) = (-1)^n P_n^{(\beta,\alpha)}(x)$.
A similar asymptotic formula holds for Laguerre polynomials near the endpoint $0$ \cite[Thm.~8.1.3]{Szego}
\[  \lim_{n \to \infty} n^{-\alpha} L_n^{(\alpha)}(z/n) = z^{-\alpha/2} J_{\alpha}(2\sqrt{z}), \]
uniformly on compact subsets of $\mathbb{C}$.    

In this paper we will obtain asymptotic formulas of Mehler-Heine type for some classical multiple orthogonal polynomials \cite[Ch.\ 23]{Ismail} 
\cite{AptBranWVA}. 
These are polynomials $P_{\vec{n}}$ with a multi-index $\vec{n}=(n_1,\ldots,n_r) \in \mathbb{N}^r$ of degree $|\vec{n}|=n_1+\cdots+n_r$, 
which satisfy orthogonality conditions with respect to $r$ classical weights on the real
line:
\[   \int P_{\vec{n}}(x) x^k w_j(x)\, dx = 0, \qquad 0 \leq k \leq n_j-1,\ 1 \leq j \leq r. \]
Such polynomials are used in the analysis of random matrices with external field, products of random matrices \cite{KuijlZhang}, non-intersecting Brownian motions, and various other determinantal point processes \cite{Kuijl2}.

\section{Multiple orthogonal polynomials}

\subsection{Jacobi-Angelesco polynomials}
The Jacobi-Angelesco polynomials are given by  \cite[\S 23.3.1]{Ismail}
\begin{equation}  \label{RodJacAng}
 (1+x)^\alpha x^\beta (1-x)^\gamma P_{n,n}(x) = (-1)^n \frac{d^n}{dx^n} \left( (1+x)^{n+\alpha} x^{n+\beta} (1-x)^{n+\gamma} \right). 
\end{equation}
Note that we used a different normalization and that these polynomials are not monic but have leading coefficient $(\alpha+\beta+\gamma+1)_{3n}/(\alpha+\beta+\gamma+1)_{2n}$.
For $\alpha,\beta,\gamma >-1$ they satisfy orthogonality relations on two touching intervals $[-1,0]$ and $[0,1]$:
\begin{align*}  
 \int_{-1}^0  P_{n,n}(x) x^k (1+x)^\alpha |x|^\beta (1-x)^\gamma \, dx &= 0, \qquad 0 \leq k \leq n-1, \\
 \int_0^1 P_{n,n}(x) x^k (1+x)^\alpha |x|^\beta (1-x)^\gamma \, dx &= 0, \qquad 0 \leq k \leq n-1. 
\end{align*}
The following theorem was proved in \cite{Takata2} and \cite[Thm. 1.4]{Tulyakov}. Their proofs are based on a differential equation or a recurrence relations for these polynomials. We give an alternative proof.

\begin{theorem}  \label{thm:JacAng}
Suppose $P_{n,n}$ are the Jacobi-Angelesco polynomials, then
\[   \lim_{n \to \infty} (-1)^n P_{n,n}(z/n^{3/2}) = {}_0F_2(-;\frac{\beta+1}{2},\frac{\beta}{2}+1;-z^2/4) 
		= \sum_{k=0}^\infty (-1)^k \frac{z^{2k}}{(\beta+1)_{2k} k!} ,  \]
uniformly on compact subsets of $\mathbb{C}$. 
\end{theorem}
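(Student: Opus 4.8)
The plan is to work directly from the Rodrigues formula (\ref{RodJacAng}), since the author explicitly advertises an alternative to the differential-equation and recurrence-based proofs. I would start by writing down an explicit series or hypergeometric representation of $P_{n,n}$ obtained by expanding the $n$-th derivative via the Leibniz rule applied to the product $(1+x)^{n+\alpha}x^{n+\beta}(1-x)^{n+\gamma}$, and then dividing by the weight $(1+x)^\alpha x^\beta(1-x)^\gamma$. This produces $P_{n,n}(x)$ as a finite sum over the ways of distributing the $n$ derivatives among the three factors. The key observation to exploit is the scaling $x = z/n^{3/2}$: near the origin the factors $(1+x)^{n+\alpha}$ and $(1-x)^{n+\gamma}$ are the "smooth" ones and the factor $x^{n+\beta}$ is the one carrying the endpoint behavior at $0$, which is why only $\beta$ survives in the limit.

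Let me think about what to track. First I would substitute $x = z/n^{3/2}$ and analyze each term of the Leibniz expansion. Differentiating $x^{n+\beta}$ a total of $j$ times yields $(n+\beta)(n+\beta-1)\cdots(n+\beta-j+1)\, x^{n+\beta-j}$, and after dividing by $x^\beta$ and setting $x=z/n^{3/2}$ one gets a factor roughly $n^j \cdot (z/n^{3/2})^{n-j}$ times derivatives of $(1\pm x)^{n+\cdots}$. The plan is to show that the dominant contribution as $n\to\infty$ comes from differentiating $x^{n+\beta}$ the maximal number of times (so that the surviving power of $z$ is small), and that the combinatorial and Pochhammer factors organize precisely into $\sum_k (-1)^k z^{2k}/[(\beta+1)_{2k}\,k!]$. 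The appearance of $(\beta+1)_{2k}$ strongly suggests collecting the derivative count into an even integer $2k$, with the two symmetric factors $(1+x)$ and $(1-x)$ contributing the pairing that produces $(-1)^k$ and the $k!$ (the odd-power terms should cancel by the near-symmetry $x\mapsto -x$ at leading order, since at $x=0$ the two outer factors are mirror images).

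Concretely, the main technical steps in order: (i) derive the closed-form coefficient of each monomial $x^m$ in $(-1)^n P_{n,n}(x)$ from the Leibniz/Rodrigues expansion, expressing it through products of Pochhammer symbols and binomial coefficients; (ii) rescale $x=z/n^{3/2}$ and extract, for each fixed power $z^{2k}$, the coefficient as a function of $n$; (iii) compute the limit of that $n$-dependent coefficient and verify it equals $(-1)^k/[(\beta+1)_{2k}\,k!]$, while checking that odd powers of $z$ vanish in the limit; and (iv) justify uniform convergence on compacta by dominating the tail of the series in $z$ uniformly in $n$, for instance via a bound showing the coefficients decay fast enough (comparable to those of an entire function of order $0$, as $_0F_2$ is). The normalizing power $n^{3/2}$ is the scale that balances the three-factor Rodrigues structure: each of the $2n$ "extra" derivatives in the two endpoint-sensitive directions contributes a factor $\sim n$, matched against $x^{-n}\sim n^{3n/2}$, so the cube-root-of-the-three-intervals bookkeeping is what fixes the exponent $3/2$.

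The hard part will be the careful asymptotic analysis in step (iii): controlling the Leibniz sum uniformly so that only the "all derivatives fall on $x^{n+\beta}$ except an even number $2k$ split symmetrically between $(1+x)$ and $(1-x)$" terms survive, and showing rigorously that all other terms are $o(1)$ after the $n^{3/2}$ scaling. In particular one must verify that the cross terms mixing $\alpha$ and $\gamma$ do not contribute to the limit — which is the structural reason only $\beta$ appears — and handle the Pochhammer ratios like $(n+\beta)_{\,\cdots}$ combined with powers $n^{-3/2(n-j)}$ via Stirling-type estimates. Establishing the uniform bound for analytic convergence (step (iv)) is then a routine but necessary majorant argument once the term-by-term limit is in hand.
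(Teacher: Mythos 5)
Your overall strategy (expand the Rodrigues formula, rescale $x=z/n^{3/2}$, take termwise limits, justify the interchange by a majorant) matches the paper's in outline, but the central analytic claim in your step (iii) is false as stated, and the idea needed to repair it is missing. After the substitution $x=z/n^{3/2}$, the individual Leibniz terms in which $m=i+k\geq 1$ derivatives fall on the two outer factors are \emph{not} $o(1)$: the multinomial coefficient $n!/(i!\,k!\,(n-m)!)$ contributes $\sim n^m$, the falling factorials of $(1+x)^{n+\alpha}$ and $(1-x)^{n+\gamma}$ contribute another $\sim n^m$, the ratio of the middle-factor falling factorials is $O(1)$, and $x^m$ contributes $n^{-3m/2}$, so each such term grows like $n^{m/2}$. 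The limit exists only because the sign $(-1)^k$ coming from differentiating $(1-x)^{n+\gamma}$ produces a cancellation of this leading order across all splittings $i+k=m$ (to first order the sum is proportional to $\sum_{i+k=m}(-1)^k/(i!\,k!)=(1-1)^m/m!=0$), and what survives is a subleading residue of that cancellation. In particular it is not true that the limit is carried by terms ``split symmetrically'' between the two outer factors, nor that ``all other terms are $o(1)$'': every splitting contributes at the same divergent order, and the answer is the net of a near-total cancellation. The heart of the proof is precisely the evaluation of this alternating sum, which your proposal defers to ``careful asymptotic analysis'' without supplying a mechanism.

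The paper resolves this with one clean identity. Expanding the outer factors as binomial series (rather than via Leibniz), the coefficient of $x^\ell$ involves $d_\ell(n)=\sum_{k}(-1)^k\binom{n+\alpha}{k}\binom{n+\gamma}{\ell-k}$, which is recognized as the $\ell$-th Taylor coefficient of $(1-z)^{n+\alpha}(1+z)^{n+\gamma}=(1-z^2)^n(1-z)^{\alpha}(1+z)^{\gamma}$. Since the $n$-dependence is then concentrated in $(1-z^2)^n$, one reads off $d_\ell(n)=\mathcal{O}(n^{\lfloor \ell/2\rfloor})$, with $n^{-\ell/2}d_\ell(n)\to 0$ for odd $\ell$ and $\to(-1)^{\ell/2}/(\ell/2)!$ for even $\ell$; this single computation produces the $(-1)^k$, the $k!$, and the vanishing of the odd powers, and it also explains the exponent $3/2=1+1/2$ (one power of $n$ per derivative landing on the middle factor, plus half a power from the square-root-order size of the cancelled sum) --- your ``cube root of the three intervals'' bookkeeping does not reflect this balance. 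Without this generating-function identity, or an equivalent second-order analysis of the alternating Leibniz sum, your step (iii) cannot be completed.
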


\begin{proof}
If we expand $(1+x)^{n+\alpha}$ and $(1-x)^{n+\gamma}$ as a power series, then the Rodrigues formula
\eqref{RodJacAng} becomes
\[   (1+x)^\alpha x^\beta (1-x)^\gamma P_{n,n}(x) 
  = (-1)^n \sum_{k=0}^\infty \sum_{j=0}^\infty \binom{n+\alpha}{k} \binom{n+\gamma}{j} (-1)^j \frac{d^n}{dx^n} x^{n+k+j+\beta}, \]
which readily gives
\[   P_{n,n}(x) = (1+x)^{-\alpha}(1-x)^{-\gamma} (-1)^n \sum_{k=0}^\infty \sum_{j=0}^\infty \binom{n+\alpha}{k} \binom{n+\gamma}{j} (-1)^j
            \frac{(n+\beta+1)_{k+j}}{(\beta+1)_{k+j}} x^{k+j}.  \]
This series expansion holds for $|x| < 1$ (when $\alpha$ and $\gamma$ are integers, then the series terminate). Let $k+j=\ell$, then the expansion is (after changing the order of summation)
\[   P_{n,n}(x) = (1+x)^{-\alpha}(1-x)^{-\gamma} (-1)^n  
 \sum_{\ell=0}^\infty d_\ell(n)     (-1)^\ell \frac{(n+\beta+1)_\ell}{(\beta+1)_\ell} x^\ell,  \]
where 
\[   d_\ell(n) = \sum_{k=0}^\ell (-1)^k \binom{n+\alpha}{k} \binom{n+\gamma}{\ell-k}. \]
This sum is nearly the Chu-Vandermonde sum except that it contains the alternating term $(-1)^k$. This $d_{\ell}(n)$ is the convolution
$(a\ast b)_\ell$ of the sequences $a_k = (-1)^k \binom{n+\alpha}{k}$ and  $b_k = \binom{n+\gamma}{k}$, and as such $d_\ell(n)$ is the $\ell$-th
term in the series expansion of $(1-z^2)^n(1-z)^\alpha(1+z)^\gamma$. If we write
\[    (1-z)^\alpha(1+z)^\gamma = \sum_{k=0}^\infty c_k z^k, \qquad |z| < 1, \]
then
\[  d_\ell(n) = \sum_{k=0}^{\lfloor \ell/2 \rfloor} (-1)^k \binom{n}{k} c_{\ell-2k}.  \]
This formula shows that $d_\ell(n) = \mathcal{O}(n^{\lfloor \ell/2 \rfloor})$, and more precisely
\begin{equation} \label{dinfty}
   \lim_{n \to \infty} \frac{1}{n^{\ell/2}} d_\ell(n) = \begin{cases}
                            0, & \textrm{if $\ell$ is odd}, \\
                           (-1)^{\ell/2}/(\ell/2)!, & \textrm{if $\ell$ is even}.
                             \end{cases} 
\end{equation}
Now we take $x=z/n^{3/2}$, then if $z$ is in a compact subset of $\mathbb{C}$ one has $|z|/n^{3/2} < r < 1$ for sufficiently large $n$, so that
the series for $P_{n,n}(z/n^{3/2})$ can be used. Now $(1+z/n^{3/2})^\alpha \to 1$ and $(1-z/n^{3/2})^\gamma \to 1$ uniformly for $z$ on a compact
set in the complex plane, and the convergence of the terms in the series
\[   \lim_{n \to \infty} \frac{1}{n^\ell} (n+\beta+1)_\ell = 1 \]
and \eqref{dinfty} gives
\[   \lim_{n \to \infty} (-1)^n P_{n,n}(z/n^{3/2}) = \sum_{\ell=0}^\infty (-1)^{\ell} \frac{x^{2\ell}}{\ell! (\beta+1)_{2\ell}}.  \]
The interchanging of the limit with the infinite sum can be justified by using Lebesgue's dominated convergence theorem.
The series in the limit can be expressed as a hypergeometric function by using 
\[   (\beta+1)_{2\ell} = 2^{2\ell}(\frac{\beta+1}{2})_\ell (\frac{\beta}{2}+1)_\ell.  \] 
\end{proof}

\subsection{Jacobi-Pi\~neiro polynomials}
These are defined by the Rodrigues formula \cite[\S 23.3.2]{Ismail}
\[   (-1)^{|\vec{n}|} \prod_{j=1}^r (|\vec{n}|+\alpha_j+\beta+1)_{n_j} (1-x)^\beta P_{\vec{n}}(x) 
=  \prod_{j=1}^r \left( x^{-\alpha_j} \frac{d^{n_j}}{dx^{n_j}} x^{n_j+\alpha_j} \right) (1-x)^{|\vec{n}|+\beta}. \]
For $\alpha_1,\ldots,\alpha_r,\beta >-1$ and $\alpha_i - \alpha_j \notin \mathbb{Z}$ $(i \neq j)$ they satisfy orthogonality relations on one interval $[0,1]$
with respect to $r$ Jacobi weights
\[  \int_0^1 P_{\vec{n}}(x) x^k x^{\alpha_j}(1-x)^\beta\, dx = 0, \qquad 0 \leq k \leq n_j-1, \ 1 \leq j \leq r.  \]
The Mehler-Heine formula near the endpoint $0$ is:

\begin{theorem}  \label{thm:JacPin}
Suppose $n_j = \lfloor q_j n\rfloor$, where $q_j >0$ and $q_1+\cdots+q_r = 1$, so that $|\vec{n}|/n \to 1$ as $n \to \infty$.
Then for the Jacobi-Pi\~neiro polynomials $P_{\vec{n}}$ one has
\begin{multline*}
  \lim_{n \to \infty} (-1)^{|\vec{n}|} \prod_{j=1}^r \frac{(|\vec{n}|+\alpha_j+\beta+1)_{n_j}}{(\alpha_j+1)_{n_j}}
 P_{\vec{n}}(z/n^{r+1}) \\
  = {}_0F_r (-;\alpha_1+1,\ldots,\alpha_r+1;-q_1\cdots q_r z), 
\end{multline*}
uniformly on compact subsets of $\mathbb{C}$.
\end{theorem}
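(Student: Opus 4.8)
The plan is to follow the template of the proof of Theorem~\ref{thm:JacAng}: read off an explicit power series for $P_{\vec{n}}$ from the Rodrigues formula, substitute the scaling $x=z/n^{r+1}$, and pass to the limit term by term. The decisive simplification here is that each Rodrigues operator $x^{-\alpha_j}\frac{d^{n_j}}{dx^{n_j}}x^{n_j+\alpha_j}$ acts \emph{diagonally} on monomials.

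First I would record the eigenvalue relation
\[
 x^{-\alpha_j}\frac{d^{n_j}}{dx^{n_j}}x^{m+n_j+\alpha_j} = (m+\alpha_j+1)_{n_j}\,x^m,
\]
which follows from a single differentiation. Expanding $(1-x)^{|\vec{n}|+\beta}=\sum_{m\ge0}\binom{|\vec{n}|+\beta}{m}(-1)^m x^m$ and applying the product of operators termwise turns the Rodrigues formula into an identity between power series; solving for $P_{\vec{n}}$ gives
\[
 P_{\vec{n}}(x)=\frac{(-1)^{|\vec{n}|}(1-x)^{-\beta}}{\prod_{j=1}^r(|\vec{n}|+\alpha_j+\beta+1)_{n_j}}\sum_{m=0}^\infty\binom{|\vec{n}|+\beta}{m}(-1)^m\prod_{j=1}^r(m+\alpha_j+1)_{n_j}\,x^m,
\]
valid for $|x|<1$ (terminating when the $\alpha_j$ are integers). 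Multiplying by the prefactor in the theorem cancels the sign $(-1)^{|\vec{n}|}$ and replaces $\prod_j(|\vec{n}|+\alpha_j+\beta+1)_{n_j}$ by $\prod_j(\alpha_j+1)_{n_j}$ in the denominator.

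The core step is the asymptotics of the $m$-th summand after putting $x=z/n^{r+1}$. Using $\Gamma(x+m)/\Gamma(x)\sim x^m$ I find, for fixed $m$,
\[
 \binom{|\vec{n}|+\beta}{m}\sim\frac{n^m}{m!},\qquad \frac{(m+\alpha_j+1)_{n_j}}{(\alpha_j+1)_{n_j}}=\frac{(n_j+\alpha_j+1)_m}{(\alpha_j+1)_m}\sim\frac{(q_j n)^m}{(\alpha_j+1)_m},
\]
where I would use the reflection identity between the two Pochhammer ratios to expose the $n$-dependence cleanly. The binomial contributes $n^m$, the product over $j$ contributes $n^{rm}(q_1\cdots q_r)^m$, and the rescaling contributes $n^{-(r+1)m}$; these powers of $n$ cancel \emph{exactly}, which is precisely what selects the scaling exponent $r+1$. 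The surviving constant in the $m$-th term is $(-1)^m(q_1\cdots q_r)^m z^m/\bigl(m!\prod_{j=1}^r(\alpha_j+1)_m\bigr)$, whose sum over $m$ is ${}_0F_r(-;\alpha_1+1,\dots,\alpha_r+1;-q_1\cdots q_r z)$, while $(1-z/n^{r+1})^{-\beta}\to1$ uniformly on compacta.

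The only genuine obstacle, exactly as in the previous proof, is justifying the interchange of the limit with the infinite sum. I would do this by Lebesgue dominated convergence: the reflection identity lets me bound each Pochhammer ratio by $(n_j+\alpha_j+m)^m/(\alpha_j+1)_m$ and the binomial in absolute value by $(n+|\beta|+m)^m/m!$, so that (with $R$ the radius of the compact set and $A=\max(|\beta|,\alpha_1,\dots,\alpha_r)$) the $m$-th term is dominated, for $n$ beyond a threshold $n_0$ depending only on $R$, by a constant multiple of $R^m(1+(A+m)/n_0)^{(r+1)m}/\bigl(m!\prod_j(\alpha_j+1)_m\bigr)$. The growth factor $m^{(r+1)m}$ coming from the numerators is absorbed by the product of factorials $m!\prod_j(\alpha_j+1)_m\sim(m!)^{r+1}$ via Stirling, leaving an essentially geometric, summable majorant. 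This yields local uniform convergence and completes the proof.
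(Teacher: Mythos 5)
Your proposal is correct and follows essentially the same route as the paper: the series you derive from the Rodrigues formula is exactly the explicit ${}_{r+1}F_r$ representation that the paper quotes from \cite[Eq.\ (23.3.5)]{Ismail} (your ratio $(m+\alpha_j+1)_{n_j}/(\alpha_j+1)_{n_j}$ equals $(\alpha_j+n_j+1)_m/(\alpha_j+1)_m$, the paper's upper-parameter Pochhammer symbols), and the remaining steps --- termwise limits under the scaling $x=z/n^{r+1}$ and dominated convergence --- coincide with the paper's argument. The only difference is that you derive the explicit formula from the Rodrigues operator's diagonal action on monomials rather than citing it, which makes the proof self-contained but does not change the method.
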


\begin{proof}
An explicit formula for the Jacobi-Pi\~neiro polynomials is \cite[Eq. (23.3.5)]{Ismail}
\begin{multline*}
  (-1)^{|\vec{n}|} \prod_{j=1}^r (|\vec{n}|+\alpha_j+\beta+1)_{n_j}  (1-x)^{\beta} P_{\vec{n}}(x) \\
  = \prod_{j=1}^r (\alpha_j+1)_{n_j}\  {}_{r+1}F_r\left( \left. \begin{array}{c} 
                  -|\vec{n}|-\beta,\alpha_1+n_1+1,\ldots,\alpha_r+n_r+1 \\
                     \alpha_1+1, \ldots,\alpha_r+1 \end{array} \right| x \right). 
\end{multline*}
Take $x=z/n^{r+1}$, then $(1-z/n^{r+1})^\beta \to 1$ uniformly on compact subsets of $\mathbb{C}$. Furthermore one has
\[   \lim_{n \to \infty} \frac{(\alpha_j+n_j+1)_k}{n^k}= q_j^k, \quad \lim_{n\to\infty} \frac{(-|\vec{n}|-\beta)_k}{n^k} = (-1)^k, \]
so that all the terms in the hypergeometric series converge. Lebesgue's dominated convergence theorem then gives the required result. 
\end{proof}

\subsection{Multiple Laguerre polynomials}
There are two kinds of multiple Laguerre polynomials. Multiple Laguerre polynomials of the first kind are given by the
Rodrigues formula
\[   (-1)^{|\vec{n}|} e^{-x} L_{\vec{n}}^{[1]}(x) = \prod_{j=1}^r \left( x^{-\alpha_j} \frac{d^{n_j}}{dx^{n_j}} x^{n_j+\alpha_j} \right) e^{-x}. \]
If $\alpha_1,\ldots,\alpha_r > -1$ and $\alpha_i - \alpha_j \notin \mathbb{Z}$ $(i \neq j)$ , then they satisfy the orthogonality relations
\[   \int_0^\infty L_{\vec{n}}^{[1]}(x) x^k x^{\alpha_j} e^{-x}\, dx = 0, \qquad 0 \leq k \leq n_j-1, \ 1 \leq j \leq r. \]
The weights $w_j$ all have different behavior $\mathcal{O}(x^{\alpha_j})$ as $x \to 0$. This is reflected in the following
Mehler-Heine formula.

\begin{theorem}  \label{thm:Ln1} 
Suppose $n_j = \lfloor q_j n\rfloor$, where $q_j >0$ and $q_1+\cdots +q_r = 1$, so that $|\vec{n}|/n \to 1$ as $n \to \infty$.
For the multiple Laguerre polynomials of the first kind one then has
\[  \lim_{n \to \infty} (-1)^{|\vec{n}|} \frac{L_{\vec{n}}^{[1]}(z/n^r)}{\prod_{j=1}^r (\alpha_j+1)_{n_j}}
   = {}_0F_r (-;\alpha_1+1,\ldots,\alpha_r+1; -(q_1\cdots q_r)z), \]
uniformly on compact subsets of $\mathbb{C}$.
\end{theorem}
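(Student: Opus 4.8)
The plan is to read off an explicit series representation for $L_{\vec{n}}^{[1]}$ directly from its Rodrigues formula, exactly in the spirit of the proofs of Theorems~\ref{thm:JacAng} and~\ref{thm:JacPin}, and then pass to the limit term by term. The engine of the whole argument is the observation that each operator $D_j := x^{-\alpha_j}\frac{d^{n_j}}{dx^{n_j}} x^{n_j+\alpha_j}$ acts \emph{diagonally} on monomials. Indeed, for every $s\ge 0$,
\[
  D_j x^s = x^{-\alpha_j}\frac{d^{n_j}}{dx^{n_j}} x^{n_j+\alpha_j+s}
  = \frac{\Gamma(n_j+\alpha_j+s+1)}{\Gamma(\alpha_j+s+1)}\, x^s
  = (\alpha_j+s+1)_{n_j}\, x^s,
\]
the prefactor $x^{-\alpha_j}$ restoring the original power. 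Since each $D_j$ sends $x^s$ to a scalar multiple of itself, the operators all commute and the composition $\prod_{j=1}^r D_j$ simply multiplies $x^s$ by $\prod_{j=1}^r (\alpha_j+s+1)_{n_j}$, independently of the order of application.

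First I would expand $e^{-x}=\sum_{s=0}^\infty (-x)^s/s!$ and apply $\prod_{j=1}^r D_j$ term by term, which is legitimate because a power series (here of infinite radius) may be differentiated termwise on compact sets. The Rodrigues formula then collapses to the closed form
\[
  (-1)^{|\vec{n}|} e^{-x} L_{\vec{n}}^{[1]}(x)
  = \sum_{s=0}^\infty \frac{(-x)^s}{s!}\prod_{j=1}^r (\alpha_j+s+1)_{n_j}.
\]
Dividing by $\prod_{j=1}^r (\alpha_j+1)_{n_j}$ and inserting the identity $(\alpha_j+s+1)_{n_j}=(\alpha_j+1)_{n_j}\,(\alpha_j+n_j+1)_s/(\alpha_j+1)_s$ yields
\[
  (-1)^{|\vec{n}|} e^{-x}\,\frac{L_{\vec{n}}^{[1]}(x)}{\prod_{j=1}^r (\alpha_j+1)_{n_j}}
  = \sum_{s=0}^\infty \frac{(-x)^s}{s!}\prod_{j=1}^r \frac{(\alpha_j+n_j+1)_s}{(\alpha_j+1)_s},
\]
which is, up to the factor $e^{-x}$, a ${}_rF_r$ with upper parameters $\alpha_j+n_j+1$ and lower parameters $\alpha_j+1$ evaluated at $-x$.

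Next I would substitute $x=z/n^r$. The prefactor $e^{-z/n^r}\to 1$ uniformly on compact subsets of $\mathbb{C}$, and the scaling is engineered precisely: for each fixed $s$ one has $(\alpha_j+n_j+1)_s/n^s\to q_j^s$ (since $n_j/n\to q_j$), so the $r$ numerator Pochhammer symbols contribute $\prod_{j=1}^r (\alpha_j+n_j+1)_s\sim (q_1\cdots q_r)^s n^{rs}$, exactly cancelling the $n^{rs}$ arising from $(z/n^r)^s$, while the $(\alpha_j+1)_s$ remain fixed. Hence the $s$-th term tends to $\big(-(q_1\cdots q_r)z\big)^s\big/\big(s!\prod_{j=1}^r (\alpha_j+1)_s\big)$, and summation gives ${}_0F_r(-;\alpha_1+1,\ldots,\alpha_r+1;-(q_1\cdots q_r)z)$, as claimed.

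As in Theorems~\ref{thm:JacAng} and~\ref{thm:JacPin}, the only point demanding care is the interchange of the limit $n\to\infty$ with the infinite sum, which I would justify by Lebesgue's dominated convergence theorem on the counting measure. This is where I expect the main (mild) obstacle: the numerator Pochhammer symbols grow polynomially in $n$, so one must exhibit an $n$-independent summable majorant. The bound $(\alpha_j+n_j+1)_s/n^s\le (C_0+s)^s$ (valid for all large $n$ with a fixed constant $C_0$ since $n_j\le n$) together with the denominator $s!\prod_{j=1}^r (\alpha_j+1)_s$ does the job, because the latter grows like $(s!)^{r+1}$ and thus overwhelms the former by Stirling's estimate; this is the same super-exponential decay that makes the limiting ${}_0F_r$ an entire function. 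With that majorant in hand the proof closes routinely.
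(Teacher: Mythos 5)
Your proof is correct and follows essentially the same route as the paper: both reduce the problem to the explicit ${}_rF_r$ representation with upper parameters $\alpha_j+n_j+1$ and lower parameters $\alpha_j+1$, then take $x=z/n^r$ and pass to the limit termwise via dominated convergence. The only difference is that you derive that representation from the Rodrigues formula (via the diagonal action of the operators $x^{-\alpha_j}\frac{d^{n_j}}{dx^{n_j}}x^{n_j+\alpha_j}$ on monomials), whereas the paper simply cites it from \cite[\S 23.4.1]{Ismail}; your explicit majorant for the dominated convergence step is also sound.
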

\begin{proof}
These multiple Laguerre polynomials of the first kind are given in terms of generalized hypergeometric functions as
\[   (-1)^{|\vec{n}|} e^{-x} L_{\vec{n}}^{[1]}(x) \\ 
   = \prod_{j=1}^r (\alpha_j+1)_{n_j} \ 
  {}_rF_r \left( \begin{array}{c} \alpha_1+n_1+1,\ldots,\alpha_r+n_r+1 \\
         \alpha_1+1,\ldots,\alpha_r+1  \end{array} ;-x \right),  \]
see \cite[\S 23.4.1]{Ismail}. Now take $\vec{n}=(\lfloor q_1n\rfloor,\ldots,\lfloor q_rn\rfloor)$  and let $x=z/n^r$. Then when $n \to \infty$, 
each term in the hypergeometric series converges, since
\[  \lim_{n \to \infty} \frac{(\alpha_j+n_j+1)_k}{n^k} = q_j^k. \]
Lebesgue's dominated convergence theorem and $\lim_{n \to \infty} e^{-z/n^r} =1$, then readily give the required asymptotic formula.
\end{proof}

Multiple Laguerre polynomials of the second kind are given by
\[  (-1)^{|\vec{n}|} \prod_{j=1}^r c_j^{n_j} \ x^\alpha L_{\vec{n}}^{[2]}(x) =
   \prod_{j=1}^r \left( e^{c_j x} \frac{d^{n_j}}{dx^{n_j}} e^{-c_jx} \right) x^{|\vec{n}|+\alpha}.  \]
When $\alpha > -1$ and $c_1,\ldots,c_j >0$ and $c_i \neq c_j$ whenever $i \neq j$, these polynomials satisfy the following orthogonality relations
\[   \int_0^\infty L_{\vec{n}}^{[2]}(x) x^k x^{\alpha} e^{-c_jx}\, dx = 0, \qquad 0 \leq k \leq n_j-1,\ 1 \leq j \leq r.  \]
Note that these $r$ weights have the same behavior $\mathcal{O}(x^\alpha)$ as $x \to 0$ but their asymptotic behavior near $\infty$ is different.
The Mehler-Heine asymptotic formula is now in terms of the Bessel function. 

\begin{theorem}  \label{thm:Ln2}
Suppose $n_j = \lfloor q_j n\rfloor$, where $q_j >0$ and $q_1+\cdots+q_r = 1$, so that $|\vec{n}|/n \to 1$ as $n \to \infty$.
For the multiple Laguerre polynomials of the second kind one then has
\begin{multline*}
   \lim_{n \to \infty} (-1)^{|\vec{n}|} c_1^{n_1}\cdots c_r^{n_r} \frac{L_{\vec{n}}^{[2]}(z/n)}{(\alpha + 1)_{|\vec{n}|}}  \\ 
  =  \bigl((q_1c_1+\cdots+q_rc_r)z\bigr)^{-\alpha/2}J_{\alpha}(2\sqrt{(q_1c_1+\cdots+q_rc_r)z}), 
\end{multline*} 
uniformly on compact subsets of $\mathbb{C}$.
\end{theorem}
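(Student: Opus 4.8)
The plan is to follow the same recipe as in the three preceding proofs: produce a fully explicit expansion of $L_{\vec n}^{[2]}$ in powers of $x$, substitute $x=z/n$, pass to the limit term by term, and justify the interchange of limit and sum by dominated convergence. The one new ingredient is that the Rodrigues operator must first be brought into a usable form. The key observation is the conjugation identity $e^{cx}\frac{d^n}{dx^n}e^{-cx}=\left(\frac{d}{dx}-c\right)^n$, so that, writing $D=d/dx$ and $N=|\vec n|$, the defining formula collapses to a constant-coefficient operator applied to a single power of $x$:
\[ (-1)^N \prod_{j=1}^r c_j^{n_j}\, x^\alpha L_{\vec n}^{[2]}(x) = \prod_{j=1}^r (D-c_j)^{n_j}\, x^{N+\alpha}. \]
Since the factors $D-c_j$ commute, this operator is the polynomial $p(D)$ with $p(t)=\prod_j(t-c_j)^{n_j}$, and applying $p(D)$ to $x^{N+\alpha}$ is immediate once $p$ is expanded in powers of $t$.

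Next I would organize the coefficients. Writing $p(t)=t^N\prod_j(1-c_j/t)^{n_j}$ and introducing the generating function
\[ \prod_{j=1}^r (1-c_j u)^{n_j} = \sum_{\ell=0}^N g_\ell\, u^\ell, \qquad g_\ell = \sum_{k_1+\cdots+k_r=\ell}\ \prod_{j=1}^r \binom{n_j}{k_j}(-c_j)^{k_j}, \]
one sees that $g_\ell$ is exactly the coefficient of $t^{N-\ell}$ in $p$. Using $D^{N-\ell}x^{N+\alpha}=\frac{\Gamma(N+\alpha+1)}{\Gamma(\ell+\alpha+1)}x^{\ell+\alpha}$ and collecting the power $x^\ell$, then dividing by $(\alpha+1)_N=\Gamma(N+\alpha+1)/\Gamma(\alpha+1)$ and by the constants on the left, gives the clean expansion
\[ (-1)^N \frac{c_1^{n_1}\cdots c_r^{n_r}}{(\alpha+1)_N}\, L_{\vec n}^{[2]}(x) = \sum_{\ell=0}^N \frac{g_\ell}{(\alpha+1)_\ell}\, x^\ell. \]

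Now I set $x=z/n$. The crux is the term-by-term limit of $g_\ell/n^\ell$: since $n_j=\lfloor q_j n\rfloor$ and $\binom{n_j}{k_j}=n_j^{k_j}/k_j!+O(n^{k_j-1})$, each summand of $g_\ell$ behaves like $n^\ell\prod_j(-q_jc_j)^{k_j}/k_j!$, and the multinomial theorem collapses the sum:
\[ \lim_{n\to\infty}\frac{g_\ell}{n^\ell} = \sum_{k_1+\cdots+k_r=\ell}\ \prod_{j=1}^r \frac{(-q_jc_j)^{k_j}}{k_j!} = \frac{(-C)^\ell}{\ell!}, \qquad C=q_1c_1+\cdots+q_rc_r. \]
This is precisely where the single combination $C$ enters and explains why the right-hand side depends only on $\sum_j q_jc_j$. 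Each term of the series then tends to $(-Cz)^\ell/(\ell!\,(\alpha+1)_\ell)$, so the limit series is ${}_0F_1(-;\alpha+1;-Cz)$; rewriting this through the standard identity ${}_0F_1(-;\alpha+1;-w)=\Gamma(\alpha+1)\,w^{-\alpha/2}J_\alpha(2\sqrt w)$ yields the Bessel-function expression on the right.

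The main obstacle is justifying the interchange of the limit with the infinite sum, which is where dominated convergence is needed. Here the positivity $c_j>0$ and the bound $\binom{n_j}{k_j}\le n_j^{k_j}/k_j!$ together with $n_j=\lfloor q_j n\rfloor\le q_j n$ give, via the multinomial theorem once more,
\[ \left|\frac{g_\ell}{n^\ell}\right| \le \sum_{k_1+\cdots+k_r=\ell}\ \prod_{j=1}^r \frac{(q_jc_j)^{k_j}}{k_j!} = \frac{C^\ell}{\ell!}, \]
so that $\bigl|g_\ell z^\ell/(n^\ell(\alpha+1)_\ell)\bigr|$ is dominated, uniformly for $z$ in a compact set, by the summable sequence $(C|z|)^\ell/(\ell!\,(\alpha+1)_\ell)$ (the terms with $\ell>N$ being zero). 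Dominated convergence on the counting measure then legitimizes passing to the limit inside the sum and delivers convergence uniform on compact subsets of $\mathbb{C}$.
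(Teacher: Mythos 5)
Your proof is correct, and the limiting argument at its core is the same as the paper's: organize the coefficients of $x^\ell$, recognize the limit of the $\ell$-th coefficient via the multinomial theorem as $(-C)^\ell/\ell!$ with $C=q_1c_1+\cdots+q_rc_r$, and justify the term-by-term passage with dominated convergence (the positivity of the $c_j$ making the domination clean). Where you genuinely differ is in how you obtain the explicit expansion: the paper simply quotes the known formula \cite[Eq. (23.4.5)]{Ismail} for $L_{\vec n}^{[2]}$ and reindexes $k_j\to n_j-k_j$ to read off the coefficient of $x^{|\vec k|}$, whereas you derive the expansion from the Rodrigues formula itself via the conjugation identity $e^{cx}\frac{d^n}{dx^n}e^{-cx}=(D-c)^n$, the commutativity of the constant-coefficient factors, and the generating polynomial $\prod_j(1-c_ju)^{n_j}=\sum_\ell g_\ell u^\ell$. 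Your coefficient $g_\ell/(\alpha+1)_\ell$ agrees exactly with the paper's (the paper's identity $(|\vec n|-|\vec k|)!\binom{|\vec n|+\alpha}{|\vec n|-|\vec k|}=(\alpha+1)_{|\vec n|}/(\alpha+1)_{|\vec k|}$ is your $\Gamma$-quotient in disguise), so the two routes meet at the same series; yours buys self-containedness at the cost of a page of operator calculus, and is a nice illustration that the cited explicit formula is itself a one-line consequence of the Rodrigues formula.

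One small remark: you correctly state ${}_0F_1(-;\alpha+1;-w)=\Gamma(\alpha+1)\,w^{-\alpha/2}J_\alpha(2\sqrt{w})$, yet then claim this ``yields the expression on the right,'' which has no $\Gamma(\alpha+1)$. In fact both your limit series and the paper's equal $\Gamma(\alpha+1)\,(Cz)^{-\alpha/2}J_\alpha(2\sqrt{Cz})$, so the constant $\Gamma(\alpha+1)$ appears to be missing from the theorem statement itself (compare the classical Laguerre case, where the normalization $n^{-\alpha}\sim \Gamma(\alpha+1)/(\alpha+1)_n$ absorbs it). This is not a flaw in your argument --- if anything, your bookkeeping exposes the discrepancy --- but you should either carry the constant through or flag it rather than assert agreement with the displayed right-hand side.
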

\begin{proof}
An explicit expression of the multiple Laguerre polynomials of the second kind is \cite[Eq. (23.4.5)]{Ismail}
\[  L_{\vec{n}}^{[2]}(x) = \sum_{k_1=0}^{n_1} \cdots \sum_{k_r=0}^{n_r} \binom{n_1}{k_1} \cdots \binom{n_r}{k_r} \binom{|\vec{n}|+\alpha}{|\vec{k}|}
   (-1)^{|\vec{k}|} \frac{|\vec{k}|!}{c_1^{k_1}\cdots c_r^{k_r}} x^{|\vec{n}|-|\vec{k}|}.   \]
Change $k_j \to n_j-k_j$ to find
\[  (-1)^{|\vec{n}|} L_{\vec{n}}^{[2]}(x) = \sum_{k_1=0}^{n_1} \cdots \sum_{k_r=0}^{n_r} \binom{n_1}{k_1} \cdots \binom{n_r}{k_r} 
  \binom{|\vec{n}|+\alpha}{|\vec{n}|-|\vec{k}|}
   (-1)^{|\vec{k}|} \frac{(|\vec{n}|-|\vec{k}|)!}{c_1^{n_1-k_1}\cdots c_r^{n_r-k_r}} x^{|\vec{k}|}.   \]
Now take $\vec{n}=(\lfloor q_1n\rfloor,\ldots,\lfloor q_rn\rfloor)$  and  $x=z/n$, then as $n \to \infty$ one can use
\[   \lim_{n \to \infty} n^{-k_j} \binom{n_j}{k_j} = q_j^{k_j}, \quad  
  (|\vec{n}|-|\vec{k}|)! \binom{|\vec{n}|+\alpha}{|\vec{n}|-|\vec{k}|} = \frac{(\alpha+1)_{|\vec{n}|}}{(\alpha+1)_{|\vec{k}|}}, \]
to find
\[   \lim_{n \to \infty} (-1)^{|\vec{n}|} c_1^{n_1}\cdots c_r^{n_r} \frac{L_{\vec{n}}^{[2]}(z/n)}{(\alpha+1)_{|\vec{n}|}} 
   = \sum_{k_1=0}^\infty \cdots \sum_{k_r=0}^\infty (-1)^{|\vec{k}|} \frac{(q_1c_1)^{k_1}\cdots (q_rc_r)^{k_r}}{(\alpha+1)_{|\vec{k}|}} 
\frac{z^{|\vec{k}|}}{k_1!\cdots k_r!} , \]
uniformly on compact subsets of $\mathbb{C}$. The limit can also be written as
\[   \sum_{k=0}^\infty (-1)^k \frac{z^k}{(\alpha+1)_k k!}  \sum_{|\vec{k}|=k} \frac{k!}{k_1!\cdots k_r!} (q_1c_1)^{k_1}\cdots (q_rc_r)^{k_r}, \]
but by the multinomial theorem
\[   \sum_{|\vec{k}|=k} \frac{k!}{k_1!\cdots k_r!} (q_1c_1)^{k_1}\cdots (q_rc_r)^{k_r} = (q_1c_1 + \cdots + q_rc_r)^k, \]
from which the result follows.
\end{proof}

Mehler-Heine asymptotics for other multiple Laguerre polynomials 
\begin{equation}  \label{So}
    x^p e^{-x^r} L_n(x,p) = \frac{1}{n!} \frac{d^n}{dx^n} \left( x^{n+p}e^{-x^r} \right), 
\end{equation}
were given in \cite{Sorokin} and \cite{Tulyakov}. These are polynomials of degree $rn$ with orthogonality conditions (for $p >-1$)
\[   \int_0^{\infty \omega^j} L_n(x,p) x^k x^p e^{-x^r}\, dx = 0, \qquad 0 \leq k \leq n-1, \ 0 \leq j \leq r-1, \]
where $\omega = e^{2\pi i/r}$. Observe that the orthogonality relations are not on the real line but on $r$ rays in the complex plane.
We do not give a proof but refer to \cite{Sorokin,Tulyakov}.

\begin{theorem}  \label{thm:SoTu}
For the multiple Laguerre polynomials given by \eqref{So} one has
\[  \lim_{n \to \infty}  n^{-p} L_n(x,p) = \frac{1}{\Gamma(p+1)} {}_0F_r(-;\frac{p+1}{r},\ldots,\frac{p+r}{r};-(z/r)^r),  \]
 uniformly on compact sets of $\mathbb{C}$.
\end{theorem}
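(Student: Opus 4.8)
The plan is to follow the pattern of Theorems~\ref{thm:JacAng} and~\ref{thm:Ln1}: convert the Rodrigues formula \eqref{So} into an explicit generalized hypergeometric series, apply the hard-edge scaling (here $x=z/n$, consistent with the variable $z$ appearing on the right-hand side), and pass to the limit term by term. First I would expand the entire factor $e^{-x^r}=\sum_{m\ge0}(-1)^m x^{rm}/m!$ in \eqref{So} and differentiate the monomials one at a time via $\frac{d^n}{dx^n}x^{n+p+rm}=\tfrac{\Gamma(n+p+rm+1)}{\Gamma(p+rm+1)}x^{p+rm}$. Dividing by $x^p$ and factoring out the $m=0$ term $\Gamma(n+p+1)/(n!\,\Gamma(p+1))=(p+1)_n/n!$ gives
\[
  e^{-x^r}L_n(x,p)=\frac{(p+1)_n}{n!}\sum_{m=0}^\infty \frac{(-1)^m\,(n+p+1)_{rm}}{(p+1)_{rm}}\,\frac{x^{rm}}{m!}.
\]
Since $e^{-x^r}$ is entire, this identity holds for all $x\in\mathbb{C}$, in contrast with the Jacobi--Angelesco case where a disc of convergence intervened.

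The key algebraic step is then to apply the Gauss multiplication formula $(a)_{rm}=r^{rm}\prod_{j=1}^r(\tfrac{a+j-1}{r})_m$ (whose $r=2$ case is the duplication identity used at the end of the proof of Theorem~\ref{thm:JacAng}) to both Pochhammer symbols of index $rm$. The powers $r^{rm}$ cancel and one obtains the closed form
\[
  L_n(x,p)=e^{x^r}\,\frac{(p+1)_n}{n!}\;
  {}_rF_r\!\left(\begin{array}{c}\frac{n+p+1}{r},\ldots,\frac{n+p+r}{r}\\[2pt] \frac{p+1}{r},\ldots,\frac{p+r}{r}\end{array};-x^r\right).
\]
Now I would set $x=z/n$ and multiply by $n^{-p}$. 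The prefactor satisfies $n^{-p}(p+1)_n/n!\to 1/\Gamma(p+1)$ and $e^{(z/n)^r}\to1$ uniformly on compacta, while in the series each upper parameter contributes $(\tfrac{n+p+j}{r})_m/n^m\to r^{-m}$, so that, combined with $(z/n)^{rm}$, the $m$-th term tends to $(-1)^m(z/r)^{rm}/\bigl(\prod_{j=1}^r(\tfrac{p+j}{r})_m\,m!\bigr)$. This is exactly the $m$-th term of ${}_0F_r(-;\tfrac{p+1}{r},\ldots,\tfrac{p+r}{r};-(z/r)^r)$, and dividing by $\Gamma(p+1)$ yields the asserted limit.

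The one genuinely delicate point, as in the earlier theorems, is justifying the interchange of limit and summation by dominated convergence. Here a uniform majorant is less immediate than it first appears, because for fixed $n$ the terms of the series grow in $m$ before decaying (the polynomial has degree $rn$). I would obtain it from the elementary uniform bound $(\tfrac{n+p+j}{r})_m/n^m\le(\tfrac{1+p+r}{r})_m$, valid for all $n\ge1$: its $r$-fold product grows only like a power of $m$ relative to $\prod_{j=1}^r(\tfrac{p+j}{r})_m$, so that for $|z|\le R$ the modulus of the $m$-th term is dominated by a summable sequence of the form $C_R\,R^{rm}/m!$. Everything else is the routine bookkeeping of the multiplication formula and the index shift $j\mapsto j-1$.
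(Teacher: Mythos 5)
The paper itself gives no proof of Theorem~\ref{thm:SoTu}: it explicitly defers to Sorokin and Tulyakov, whose arguments rest on a differential equation, respectively a recurrence (difference scheme), for $L_n(x,p)$. Your argument is therefore a genuinely different route, and it is correct; moreover it is precisely the route the paper follows for the theorems it does prove (Rodrigues formula $\to$ explicit hypergeometric representation $\to$ hard-edge scaling $\to$ dominated convergence). Your key intermediate identity
\[
  e^{-x^r}L_n(x,p)=\frac{(p+1)_n}{n!}\;
  {}_rF_r\!\left(\begin{array}{c}\frac{n+p+1}{r},\ldots,\frac{n+p+r}{r}\\ \frac{p+1}{r},\ldots,\frac{p+r}{r}\end{array};-x^r\right)
\]
checks out (for $r=1$ it reduces, via Kummer's transformation, to the standard ${}_1F_1$ form of the Laguerre polynomial, and the ensuing limit recovers the classical Mehler--Heine formula); the Gauss multiplication formula is the right generalization of the duplication identity used at the end of the proof of Theorem~\ref{thm:JacAng}; and $x=z/n$ is the correct reading of the misprint $L_n(x,p)$ in the statement. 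You also correctly identify and handle the one delicate point: for each fixed $n$ the ${}_rF_r$ series is genuinely infinite (the polynomial is only recovered after multiplying by $e^{x^r}$), so a majorant uniform in $n$ is needed, and your bound $(\frac{n+p+j}{r})_m/n^m\le(\frac{1+p+r}{r})_m$ does hold for $n\ge1$, $p>-1$, $1\le j\le r$. The only cosmetic imprecision is that the resulting dominating sequence is $\mathrm{poly}(m)\,R^{rm}/m!$ rather than literally $C_RR^{rm}/m!$, which is of course still summable. In short, your proof supplies a self-contained argument, uniform in style with the rest of the paper, where the paper relies on external proofs by quite different (differential/difference equation) methods.
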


\subsection{Multiple orthogonal polynomials for modified Bessel functions}
Multiple orthogonal polynomials associated with the modified Bessel functions $K_\nu$ and $K_{\nu+1}$ were introduced in \cite{WVAYaku}, 
see also \cite{BenDouak}. They satisfy the orthogonality relations
\begin{align*}
    \int_0^\infty x^k P_{n,m}(x) x^{\alpha+\nu/2} K_{\nu}(2\sqrt{x})\, dx &= 0, \qquad 0 \leq k \leq n-1, \\
    \int_0^\infty x^k P_{n,m}(x) x^{\alpha+(\nu+1)/2} K_{\nu+1}(2\sqrt{x})\, dx &= 0, \qquad 0 \leq k \leq m-1, 
\end{align*}

\begin{theorem}  \label{thm:Knu}
Let $p_{2n}(x)=P_{n,n}(x)$ and $p_{2n+1}(x) = P_{n+1,n}(x)$, where $P_{n,m}$ are the multiple orthogonal polynomials
associated with the modified Bessel functions $K_{\nu}$ and $K_{\nu+1}$. Then 
\[  \lim_{n \to \infty} (-1)^n \frac{p_n(z/n)}{(\alpha+1)_n(\alpha+\nu+1)_n} = {}_0F_2(-;\alpha+1,\alpha+\nu+1;-z), \]
uniformly on compact subsets of $\mathbb{C}$.
\end{theorem}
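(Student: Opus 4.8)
The plan is to mimic the pattern of the previous proofs: produce an explicit hypergeometric representation of the stepline polynomials, insert the scaled argument, and let the index tend to infinity term by term, justifying the interchange by dominated convergence. Throughout I write $N$ for the degree, so that $p_N$ has degree $N$ and the limit is taken as $N\to\infty$, and I abbreviate the two weights as $w_1(x)=x^{\alpha+\nu/2}K_\nu(2\sqrt{x})$ and $w_2(x)=x^{\alpha+(\nu+1)/2}K_{\nu+1}(2\sqrt{x})$. The essential preparatory work is to identify the explicit formula, and for this I would first record the moments. Using the integral evaluation $\int_0^\infty x^{s-1+\nu/2}K_\nu(2\sqrt{x})\,dx = \tfrac12\Gamma(s)\Gamma(s+\nu)$ one computes
\[ \int_0^\infty x^k w_1(x)\,dx = \tfrac12\Gamma(\alpha+1)\Gamma(\alpha+\nu+1)(\alpha+1)_k(\alpha+\nu+1)_k, \]
\[ \int_0^\infty x^k w_2(x)\,dx = \tfrac12\Gamma(\alpha+1)\Gamma(\alpha+\nu+2)(\alpha+1)_k(\alpha+\nu+2)_k, \]
so the moments carry exactly the Pochhammer structure that will reappear as the denominators in the limiting ${}_0F_2$.

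Next I would claim that, in the normalization of the statement, the monic stepline polynomial of degree $N$ is
\[ p_N(x) = (-1)^N(\alpha+1)_N(\alpha+\nu+1)_N\ {}_1F_2(-N;\alpha+1,\alpha+\nu+1;x). \]
To verify this I would substitute the moment formulas into $\int p_N(x)x^k w_j(x)\,dx$ and use $(\alpha+1)_{i+k}/(\alpha+1)_i = (\alpha+1+i)_k$ together with the analogous identity for the $(\alpha+\nu+\cdot)$ factors. Each orthogonality integral then reduces, up to a constant, to a sum $\sum_{i=0}^N (-1)^i\binom{N}{i}P(i)$, where $P$ is a polynomial in $i$ of degree $2k$ for $w_1$ and $2k+1$ for $w_2$. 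Since the $N$-th finite difference annihilates every polynomial of degree $<N$, such a sum vanishes precisely when $2k<N$, respectively $2k+1<N$. A direct parity check shows these ranges coincide with the orthogonality requirements for $P_{n,n}$ (when $N=2n$) and for $P_{n+1,n}$ (when $N=2n+1$); in particular one and the same ${}_1F_2$ furnishes both the even and the odd members of the stepline, so no separate computation for the two parities is needed.

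With the formula in hand the limit is routine. Substituting $x=z/N$ gives
\[ \frac{(-1)^N p_N(z/N)}{(\alpha+1)_N(\alpha+\nu+1)_N} = \sum_{i=0}^N \frac{(-N)_i}{(\alpha+1)_i(\alpha+\nu+1)_i}\frac{(z/N)^i}{i!}, \]
and since $(-N)_i/N^i \to (-1)^i$ as $N\to\infty$, the $i$-th term tends to $(-z)^i/[(\alpha+1)_i(\alpha+\nu+1)_i\,i!]$. Dominating the terms uniformly on a compact $z$-set by a convergent series and applying Lebesgue's dominated convergence theorem yields $\sum_{i\ge0}(-z)^i/[(\alpha+1)_i(\alpha+\nu+1)_i\,i!] = {}_0F_2(-;\alpha+1,\alpha+\nu+1;-z)$, uniformly on compact subsets of $\mathbb{C}$.

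I expect the genuine obstacle to lie in establishing the explicit representation, not in the limit. The delicate point is the simultaneous verification that a single ${}_1F_2$ realizes both $P_{n,n}$ and $P_{n+1,n}$: this rests on the two finite-difference degrees $2k$ and $2k+1$ matching the two orthogonality ranges exactly, which is what forces the parity-independent form. Should one prefer, the explicit formula can instead be quoted from \cite{WVAYaku}, after which only the elementary termwise limit of the last paragraph remains.
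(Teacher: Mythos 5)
Your proposal is correct and, for the actual limit, identical to the paper's argument: insert $x=z/n$ into the explicit ${}_1F_2$ representation, use $(-n)_k/n^k\to(-1)^k$, and justify the termwise limit by dominating with ${}_0F_2(-;\alpha+1,\alpha+\nu+1;|z|)$ via $|(-n)_k|\le n^k$. The one place you go beyond the paper is the explicit formula itself: the paper simply quotes $p_n(x)=(-1)^n(\alpha+1)_n(\alpha+\nu+1)_n\,{}_1F_2(-n;\alpha+1,\alpha+\nu+1;x)$ from Coussement--Van Assche, whereas you rederive it from the Mellin-type moments $\tfrac12\Gamma(s)\Gamma(s+\nu)$ and the vanishing of $N$-th finite differences of polynomials of degree $<N$. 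That derivation checks out: the $w_1$-integral produces $(\alpha+1+i)_k(\alpha+\nu+1+i)_k$ of degree $2k$ in $i$, the $w_2$-integral produces an extra factor $(\alpha+\nu+2)_{i+k}/(\alpha+\nu+1)_i=(\alpha+\nu+1+i)_{k+1}/(\alpha+\nu+1)$ of degree $k+1$, hence total degree $2k+1$, and the conditions $2k<N$ and $2k+1<N$ do reproduce exactly the orthogonality ranges for $P_{n,n}$ ($N=2n$) and $P_{n+1,n}$ ($N=2n+1$). So your proof is self-contained where the paper's is not; the only caveat is the word ``precisely,'' since the finite-difference argument only gives the vanishing direction (which is all that is needed), and the formula you want to cite is in Coussement--Van Assche rather than Van Assche--Yakubovich.
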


\begin{proof}
An explicit expression for these multiple orthogonal polynomials is \cite[Eq. (23)]{ElsWVA1}
\[   p_n(x) = (-1)^n (\alpha+1)_n(\alpha+\nu+1)_n \ {}_1F_2(-n;\alpha+1,\alpha+\nu+1;x).  \]
Take $x=z/n$ and use the fact that $(-n)_k/n^k \to (-1)^k$ as $n \to \infty$ to find the required result. The limit can be taken termwise
because one has $|(-n)_k| \leq n^k$ so that the sum
\[  \sum_{k=0}^n \frac{(-n)_k}{(\alpha+1)_k (\alpha+\nu+1)_k} \frac{(z/n)^k}{k!}  \]
is dominated by
\[   \sum_{k=0}^n \frac{1}{(\alpha+1)_k (\alpha+\nu+1)_k} \frac{|z|^k}{k!} = {}_0F_2(-;\alpha+1,\alpha+\nu+1;|z|)  \]
and hence Lebesgue's dominated convergence theorem can be used.
\end{proof}

There are also multiple orthogonal polynomials associated with the modified Bessel functions $I_\nu$ and $I_{\nu+1}$, see \cite{ElsWVA}
and \cite{Douak}. They satisfy (for $c > 0$)
\begin{align*}
    \int_0^\infty x^k P_{n,m}(x)  x^{\nu/2} e^{-cx} I_{\nu}(2\sqrt{x})\, dx &= 0, \qquad 0 \leq k \leq n-1, \\
    \int_0^\infty x^k P_{n,m}(x) x^{(\nu+1)/2} e^{-cx} I_{\nu+1}(2\sqrt{x})\, dx &= 0, \qquad 0 \leq k \leq m-1. 
\end{align*}
The Mehler-Heine asymptotics is somewhat different and is in terms of the Bessel function $J_\nu$. The following result was proved in 
\cite[Thm. 2]{ElsWVA2}.

\begin{theorem}  \label{thm:Inu}
Let $p_{2n}(x)=P_{n,n}(x)$ and $p_{2n+1}(x) = P_{n+1,n}(x)$, where $P_{n,m}$ are the multiple orthogonal polynomials
associated with the modified Bessel functions $I_{\nu}$ and $I_{\nu+1}$. Then 
\[   \lim_{n \to \infty} (-1)^n \frac{p_n(z/n)}{n^\nu n!} = e^{1/c} (cz)^{-\nu/2} J_\nu(2\sqrt{cz}), \]
uniformly on compact subsets of $\mathbb{C}$.
\end{theorem}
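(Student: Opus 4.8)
My plan is to mirror the template used for Theorems~\ref{thm:Knu} and~\ref{thm:Ln2}: begin from a closed-form representation of $p_n$, substitute $x=z/n$ together with the normalisation $(-1)^n/(n^\nu n!)$, pass to the limit term by term, and identify the resulting series. The genuinely new feature here is the prefactor $e^{1/c}$, which appears in none of the earlier theorems, so the heart of the argument is to locate where it is produced.

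First I would record the origin of $e^{1/c}$ at the level of moments, as a sanity check and a guide. Expanding $x^{\nu/2}I_\nu(2\sqrt x)=\sum_{m\ge0} x^{m+\nu}/(m!\,\Gamma(\nu+m+1))$ and integrating against $e^{-cx}$ gives, after the two Gamma factors cancel,
\[ \int_0^\infty x^{\nu/2}e^{-cx}I_\nu(2\sqrt x)\,dx = \frac{1}{c^{\nu+1}}\sum_{m\ge0}\frac{c^{-m}}{m!} = \frac{e^{1/c}}{c^{\nu+1}}, \]
while the higher moments come out as $e^{1/c}$ times a polynomial in $1/c$. This tells me that the exponential factor must enter through the summation index $m$ coming from the power series of $I_\nu$, and that $e^{1/c}=\sum_m c^{-m}/m!$ should survive the scaling as a $z$-independent constant.

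Next I would take the explicit expression for $P_{n,m}$ from \cite{ElsWVA}, specialised to the diagonal index $(n,n)$ and the near-diagonal index $(n+1,n)$ that define $p_{2n}$ and $p_{2n+1}$. In the companion $K_\nu$ case this representation was a single ${}_1F_2$ (see the proof of Theorem~\ref{thm:Knu}); because the present weight carries the extra factor $e^{-cx}$, I expect the coefficients of $p_n$ to be themselves convergent $1/c$-series, so that $p_n$ is given by a double sum: an outer polynomial sum in $x$ and an inner sum over the Bessel index $m$. Inserting $x=z/n$ and dividing by $n^\nu n!$, the outer sum is governed by the familiar limit $(-n)_k/n^k\to(-1)^k$ together with a Gamma ratio of exact order $n^\nu$ that the normalisation $n^\nu$ cancels, so the $z$-dependent part collapses to $\sum_k (-1)^k(cz)^k/(k!\,\Gamma(\nu+k+1)) = (cz)^{-\nu/2}J_\nu(2\sqrt{cz})$, while the inner $m$-sum tends to $\sum_m c^{-m}/m!=e^{1/c}$.

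The step I expect to be the main obstacle is showing that, after scaling, this double sum asymptotically factors into the product of the $z$-series and the $z$-independent exponential series, with all remainder terms vanishing uniformly on compact sets. To justify the termwise passage to the limit I would, exactly as in the earlier proofs, exhibit an $n$-independent summable majorant of product type, say $\bigl(\sum_m C^m/m!\bigr)\bigl(\sum_k |cz|^k/(k!\,|\Gamma(\nu+k+1)|)\bigr)$, and invoke Lebesgue's dominated convergence theorem; the harmless prefactor $e^{-z/(cn)}\to1$ and the $(1+o(1))$ ratios are then absorbed. Finally, the even and odd cases are handled together, since passing from the index $(n,n)$ to $(n+1,n)$ alters only lower-order factors that are neutralised by the normalisation, and the convergence is uniform on compact subsets of $\mathbb{C}$ because every elementary limit above is.
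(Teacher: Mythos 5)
First, note that the paper does not prove Theorem~\ref{thm:Inu} at all: it is quoted from Coussement and Van Assche \cite[Thm.~2]{ElsWVA2}, so there is no in-paper argument to compare against. Your overall template (explicit representation, substitute $x=z/n$, termwise limits, dominated convergence) is the same one the paper uses for Theorems~\ref{thm:Knu}, \ref{thm:Ln2} and \ref{thm:G}, and your moment computation correctly locates the origin of the factor $e^{1/c}$; that part is a sound heuristic.

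The gap is that the single nontrivial input to this scheme --- the explicit formula for $p_n$ --- is never produced. You write that you ``expect'' the coefficients of $p_n$ to be convergent $1/c$-series arranged as a double sum, and then you assert that after the scaling the double sum factors asymptotically into $\bigl(\sum_m c^{-m}/m!\bigr)\cdot\sum_k (-1)^k (cz)^k/(k!\,\Gamma(\nu+k+1))$. But that asserted factorization \emph{is} the theorem: without the actual formula from \cite{ElsWVA} you cannot verify that the Gamma ratio is of exact order $n^\nu$, that the inner sum decouples from $k$ and $n$ in the limit, that the remainder is controlled by a product-type majorant, or that the passage from index $(n,n)$ to $(n+1,n)$ only perturbs lower-order factors. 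In the $K_\nu$ case (Theorem~\ref{thm:Knu}) the whole proof consists of writing down the known ${}_1F_2$ representation and then doing the easy limit; here the analogous representation is the entire content of the argument, and guessing its shape from the zeroth moment does not substitute for it. To close the gap you would need to import the explicit expression for $P_{n,m}$ from \cite{ElsWVA} (or derive it from the Rodrigues-type/raising-operator structure of the weights $x^{\nu/2}e^{-cx}I_\nu(2\sqrt{x})$ and $x^{(\nu+1)/2}e^{-cx}I_{\nu+1}(2\sqrt{x})$) and carry out the limit on that formula.
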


\subsection{Multiple orthogonal polynomials for Meijer $G$-functions}
Recently Kuij\-laars and Zhang \cite{KuijlZhang} introduced multiple orthogonal polynomials for weights which are Meijer $G$-functions
$G_{0,M}^{M,0}$, and they appear in investigating singular values of products of random matrices. They satisfy
\[    \int_0^\infty P_m(x) x^k w_j(x)\, dx = 0, \qquad 0 \leq k \leq \lceil \frac{m-j}{r} \rceil -1, \]
for $0 \leq j \leq r-1$, where the weights $w_j$ are Meijer $G$-functions
\begin{align*}
    w_j(x) &= G_{0,r}^{r,0} \left( \left. \begin{array}{c}  - \\ \nu_r, \nu_{r-1},\ldots,\nu_2,\nu_1+j \end{array} \right| x \right) \\ 
    &= \frac{1}{2\pi i} \int_{c-i\infty}^{c+i\infty} (s+\nu_1)_j \prod_{k=1}^r \Gamma(s+\nu_k) x^{-s}\, ds. 
\end{align*}
These are multiple orthogonal polynomials with multi-index $(n+1,\ldots,n+1,n,\ldots,n)$ when $m=nr+s$.
The special case $r=2$ corresponds to the multiple orthogonal polynomials for the modified Bessel functions $K_\nu$ and $K_{\nu+1}$
for $\nu=\nu_1-\nu_2$  and $\alpha=\nu_2$.

\begin{theorem}  \label{thm:G}
For the multiple orthogonal polynomials with the Meijer $G$-functions one has
\[  \lim_{n \to \infty}  (-1)^n \frac{P_n(z/n)}{\prod_{j=1}^r (\nu_j+1)_n} 
  =  {}_0F_r (-;\nu_1+1,\ldots,\nu_r+1; -z), \]
uniformly on compact subsets of $\mathbb{C}$.
\end{theorem}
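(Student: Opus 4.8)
The plan is to reduce the statement to the same termwise-limit argument used in the earlier theorems, and in particular to mimic almost verbatim the proof of Theorem~\ref{thm:Knu}, which is precisely the case $r=2$. The essential ingredient I would need is an explicit representation of $P_n$ as a single generalized hypergeometric function. Guided by the correspondence noted in the text (for $r=2$, $\nu=\nu_1-\nu_2$ and $\alpha=\nu_2$) and by the formula $p_n(x)=(-1)^n(\alpha+1)_n(\alpha+\nu+1)_n\,{}_1F_2(-n;\alpha+1,\alpha+\nu+1;x)$ from \cite{ElsWVA1}, I expect the Kuijlaars--Zhang polynomials \cite{KuijlZhang} to satisfy
\[
  P_n(x) = (-1)^n \prod_{j=1}^r (\nu_j+1)_n \ {}_1F_r(-n;\nu_1+1,\ldots,\nu_r+1;x).
\]
Locating or deriving this identity from \cite{KuijlZhang} is the one genuinely new step; everything after it is routine.

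Granting the representation, the next steps are immediate. Dividing by the normalizing factor and using $(-1)^n(-1)^n=1$ gives
\[
  (-1)^n \frac{P_n(z/n)}{\prod_{j=1}^r (\nu_j+1)_n}
   = {}_1F_r\!\left(-n;\nu_1+1,\ldots,\nu_r+1;\frac{z}{n}\right)
   = \sum_{k=0}^{n} \frac{(-n)_k}{\prod_{j=1}^r (\nu_j+1)_k}\,\frac{(z/n)^k}{k!}.
\]
I would then pass to the limit term by term. Writing $(-n)_k (z/n)^k = \bigl((-n)_k/n^k\bigr) z^k$ and using $(-n)_k/n^k \to (-1)^k$ as $n\to\infty$, the $k$-th term converges to $(-z)^k / \bigl(k!\prod_{j=1}^r(\nu_j+1)_k\bigr)$, which is exactly the $k$-th coefficient of ${}_0F_r(-;\nu_1+1,\ldots,\nu_r+1;-z)$.

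The interchange of limit and summation I would justify exactly as in Theorem~\ref{thm:Knu}: since $|(-n)_k|\le n^k$, each term is dominated in absolute value by $|z|^k/\bigl(k!\prod_{j=1}^r(\nu_j+1)_k\bigr)$, whose sum is the convergent series ${}_0F_r(-;\nu_1+1,\ldots,\nu_r+1;|z|)$, so Lebesgue's dominated convergence theorem applies and the convergence is uniform on compact subsets of $\mathbb{C}$. I do not anticipate any obstacle in the limit itself; the only real content is obtaining the ${}_1F_r$ representation, after which the argument is identical in structure to the proofs of Theorems~\ref{thm:Knu}, \ref{thm:JacPin}, and \ref{thm:Ln1}.
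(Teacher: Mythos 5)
Your proposal matches the paper's proof: the conjectured representation $P_n(x) = (-1)^n \prod_{j=1}^r (\nu_j+1)_n \, {}_1F_r(-n;\nu_1+1,\ldots,\nu_r+1;x)$ is exactly the identity the paper quotes from Kuijlaars--Zhang (their Eq.\ (3.10)), and the subsequent termwise limit with the domination $|(-n)_k|\le n^k$ is the same argument the paper gives (by reference to the proof of Theorem~\ref{thm:Knu}). No gaps; the approach is essentially identical.
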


\begin{proof}
The proof is a straightforward generalization of the proof of Theorem \ref{thm:Knu}.
These multiple orthogonal polynomials are hypergeometric polynomials \cite[Eq. (3.10)]{KuijlZhang}
\[  P_n(x) = (-1)^n \prod_{j=1}^r (\nu_j+1)_n \ {}_1F_r(-n;\nu_1+1,\ldots,\nu_r+1;x). \]
If we take $x=z/n$ and let $n \to \infty$, then $(-n)_k/n^k \to (-1)^k$ and the limit in this hypergeometric function
can be taken termwise. The result then follows immediately.
\end{proof}    

\section{Generalized Bessel functions}
The Mehler-Heine formulas for multiple orthogonal polynomials show that the function ${}_0F_r(-;\alpha_1+1,\ldots,\alpha_r+1;z)$
takes over the role of the Bessel function $J_\alpha$ for the local asymptotics near a hard edge. In the literature one uses the
terminology generalized Bessel function already for the entire function
\[  \phi(z) =  \sum_{k=0}^\infty \frac{z^k}{\Gamma(\rho k+\beta) k!}, \]
when $\rho > 0$ and $\beta$ any complex number (see, \cite{WongZhao,Wright}, and \cite[\S 2]{Sorokin}). Note that for $\rho=1$ one has
$J_{\beta-1}(t)=(t/2)^{\beta-1} \phi(-t^2/4)$ and for $\rho=r$ a positive integer, the multiplication formula for the gamma function,
\[    \Gamma(rz) = (\sqrt{2\pi})^{1-r} r^{rz-1/2} \prod_{j=0}^{r-1} \Gamma\left(z+\frac{j}{r}\right),  \]  
shows that $\phi(z)$ is, up to a multiplicative factor, equal to ${}_0F_r(-;\beta/r,(\beta+1)/r,\ldots, (\beta+r-1)/r;z/r^r)$.
So we may consider the entire function ${}_0F_r(-;\alpha_1+1,\ldots,\alpha_r+1;z)$ as a generalization of the Bessel function.
It satisfies the differential equation
\begin{equation}  \label{diffeq}
   \Theta (\Theta+\alpha_1)\cdots(\Theta+\alpha_r)y - zy = 0, \qquad \Theta = z \frac{d}{dz}, 
\end{equation}
which is a linear differential equation of order $r+1$ with a regular singular point at $z=0$ and an irregular singularity at infinity.
If no $\alpha_j$ is an integer and $\alpha_i-\alpha_j \notin \mathbb{Z}$ $(i \neq j)$, then a fundamental set of solutions of this differential equation is 
\[ y_0(z)={}_0F_r(-;\alpha_1+1,\ldots,\alpha_r+1;z) \]
together with
\[ y_j(z) = z^{-\alpha_j} {}_0F_r(-;1-\alpha_j,1+\alpha_1-\alpha_j,\ldots*\ldots,1+\alpha_r-\alpha_j;z), \qquad 1 \leq j \leq r, \] 
where $*$ indicates that the entry $1+\alpha_j-\alpha_j$ is omitted \cite[\S 16.8(ii)]{NIST}. 
One has the differentiation formulas \cite[\S 16.3(i)]{NIST}
\[   \frac{d}{dz} {}_0F_r(-;\alpha_1+1,\ldots,\alpha_r+1;z) = \frac{1}{(\alpha_1+1)\cdots(\alpha_r+1)} {}_0F_r(-;\alpha_1+2,\ldots,\alpha_r+2;z), \]
and for $1 \leq  j \leq r$
\[   \frac{d}{dz} z^{\alpha_j} {}_0F_r(-;\alpha_1+1,\ldots,\alpha_r+1;z) = \alpha_j z^{\alpha_j-1} {}_0F_r(-;\alpha_1+1,\ldots,\alpha_j,\ldots,\alpha_r+1;z). \]

\section{Applications}
The functions ${}_0F_r(-; \alpha_1+1,\ldots,\alpha_r+1;-z)$ appear as the limit in Theorems \ref{thm:JacPin}, \ref{thm:Ln1}, \ref{thm:Knu} (for $r=2$)
and \ref{thm:G}. It also appears in Theorem \ref{thm:JacAng} for $r=2$ with a quadratic variable and in Theorem \ref{thm:SoTu} with the variable $x^r$. It is known that the zeros of the Jacobi-Pi\~neiro polynomials are all in $[0,1]$ and the zeros of the multiple Laguerre polynomials of the first kind are all in $[0,\infty)$, since
the $r$ weights form an AT-system \cite[Thm. 23.1.4]{Ismail}. As a result of Hurwitz' theorem, we can therefore conclude that all the zeros
of ${}_0F_r(-; \alpha_1+1,\ldots,\alpha_r+1;-z)$ are positive. Let us denote these zeros by $(f_k)_{k=1,2,\ldots}$, then as a consequence of
Theorem \ref{thm:JacPin} we have that the $k$th zero $x_{k,\vec{n}}$ of the Jacobi-Pi\~neiro polynomial $P_{\vec{n}}$ has the asymptotic behavior
\[   \lim_{n \to \infty} n^{r+1} x_{k,\vec{n}} = \frac{f_k}{q_1\cdots q_r}, \qquad k=1,2,\ldots, \]
when $\vec{n} = (\lfloor q_1n \rfloor, \ldots, \lfloor q_r n\rfloor )$.
In a similar way one finds for the zeros of the multiple Laguerre polynomials of the first kind
\[   \lim_{n \to \infty} n^{r} x_{k,\vec{n}} = \frac{f_k}{q_1\cdots q_r}, \qquad k=1,2,\ldots,  \]
but for the zeros of the multiple Laguerre polynomials of the second kind one has
\[   \lim_{n \to \infty} n x_{k,\vec{n}} = \frac12 \left( \frac{j_k}{q_1c_1+\cdots+q_rc_r} \right)^2, \qquad k=1,2,\ldots, \]
where $(j_k)_{k=1,2,\ldots}$ are the positive zeros of the Bessel function $J_\alpha$.
Similar results hold for the zeros $x_{k,n}$ $(1 \leq k \leq n)$ of the multiple orthogonal polynomials $p_n$ for the modified Bessel functions $K_\nu$ and $K_{\nu+1}$:
\[  \lim_{n \to \infty} n x_{k,n} = f_k, \]
where $(f_k)_{k=1,2,\ldots}$ are the zeros of ${}_0F_2(-;\alpha+1,\alpha+\nu+1;-z)$
and in general for the multiple orthogonal polynomials $P_n$ with the Meijer $G$-functions
\[   \lim_{n \to  \infty} n x_{k,n} = f_k,  \]
where $(f_k)_{k=1,2,\ldots}$ are now the zeros of ${}_0F_r(-;\nu_1+1,\ldots,\nu_r+1;-z)$.

The Mehler-Heine formulas give the asymptotic behavior, after appropriate scaling, of the multiple orthogonal polynomials near the origin. This detailed
asymptotic behavior is crucial when one wants to obtain the global (or uniform) asymptotic behavior of the multiple orthogonal polynomials
using the Riemann-Hilbert problem \cite{WVAGerKuijl} and the steepest descent method of Deift and Zhou for oscillatory Riemann-Hilbert problems
\cite{DeiftZhou}. In order to get uniform estimates, one needs to introduce local parametrices around the endpoints of the support of the asymptotic
zero distribution. Around the hard edges, i.e., those endpoints of the support of the zero distribution which are also endpoints of the supports of the weights $w_1,\ldots,w_r$, one will be needing a Riemann-Hilbert problem for which the solution contains the functions
${}_0F_r(-;\alpha_1+1,\ldots,\alpha_r+1;-z)$
and all the other solutions of the differential equation \eqref{diffeq}. This was already used for orthogonal polynomials on $[-1,1]$ with a weight function that behaves near $\pm 1$ as the Jacobi weights, and the parametrices around $\pm 1$ use a Riemann-Hilbert problem involving Bessel functions
\cite[pp.~365--368]{KMcLVAVL} \cite[\S 14]{Kuijl}. The same local parametrix was used for Laguerre-type orthogonal polynomials \cite[\S 3.7]{Vanlessen}.
A local parametrix around the origin was used for Jacobi-Angelesco polynomials in \cite{DeschKuijl1} and \cite[\S II.B]{DeschKuijl2}, but the relation with the generalized Bessel function was not explicit since they used the differential equation
\[ zq''' - \beta q'' - \tau q' + q =0,  \]  
which for $\tau = 0$ has three independent solutions of the form ${}_0F_2(-;1/2,-\beta/2;-z^2/8)$, $z {}_0F_2(-;3/2,(-\beta+1)/2;-z^2/8)$ and
$z^{\beta+2} {}_0F_2(-;(\beta+3)/2,(\beta+4)/2;-z^2/8)$.

\section{Concluding remarks}

The classical Mehler-Heine asymptotics for the Jacobi polynomials and the Laguerre polynomials near the endpoints of the intervals is in terms
of the Bessel function $J_\nu$, where the order $\nu$ is related to the behavior of the weight function near the endpoint $c$, i.e., 
$w(x) = \mathcal{O}((c-x)^\nu)$ as $x \to c$.
Our Mehler-Heine asymptotics describes the behavior of multiple orthogonal polynomials near an endpoint of the intervals of orthogonality.
For multiple orthogonal polynomials there are more weights, and our examples show that there is an important difference when weights
have a common endpoint at $0$, because then the asymptotic behavior is in terms of a `generalized' Bessel function which depends on the
behavior of these weights near that endpoint. The fact that $r$ weights have a common endpoint is crucial to get the `generalized' Bessel function.
For Jacobi-Angelesco polynomials one has also endpoints at $1$ and $-1$, but these are endpoints of only one weight function. The Mehler-Heine
asymptotics around those points would be in terms of Bessel functions $J_\alpha$ (for the endpoint $-1$) and $J_\beta$ (for the endpoint $1$).
For Jacobi-Pi\~neiro polynomials one also has a common endpoint at $1$, but the weights all have the same behavior $\mathcal{O}((1-x)^\beta)$ near that
endpoint. The asymptotic behavior around the endpoint $1$ has not been analyzed yet, but I expect it will be in terms of the Bessel function
$J_\beta$ rather than in terms of the generalized Bessel function.

The Mehler-Heine asymptotics for Jacobi-Angelesco polynomials on $[-a,0] \cup [0,1]$ with $a > 0$ but $a \neq 1$ was obtained by Takata in \cite{Takata1}.
When the two intervals are not of equal length, then the Mehler-Heine asymptotics is in terms of the Bessel function. This is because for $a \neq 1$
the zeros of the Jacobi-Angelesco polynomials accumulate on $[-a,0] \cup [a^*,1]$ with $a^*>0$ when $a<1$, or on $[-a,a^*] \cup [0,1]$ with $a^*<0$ 
when $a>1$. These intervals are not touching and $0$ is a hard edge for only one interval, whereas $a^*$ is a soft edge for the other interval.
For one hard edge one gets Bessel functions, for two hard edges one gets the limit function from Theorem~\ref{thm:JacAng} involving the
generalized Bessel function.

For the Jacobi-Angelesco polynomials (Theorem \ref{thm:JacAng}) we only considered the asymptotic behavior for the polynomials $P_{n,n}$, and in Theorems \ref{thm:Knu}--\ref{thm:Inu}
we only considered the asymptotic behavior for the multiple orthogonal polynomials $P_{n,n}$, $P_{n+1,n}$. This is because we do not have an
explicit expression of the multiple orthogonal polynomials $P_{n,m}$ when $|n-m| >1$. For the Jacobi-Angelesco polynomials, however,  we have
a Rodrigues formula of the form
\begin{multline*}    (1+x)^\alpha x^\beta (1-x)^\gamma P_{n,n+k}(x) \\
    = C_n \frac{d^n}{dx^n} \Bigl( (1+x)^{n+\alpha} x^{n+\beta} (1-x)^{n+\gamma} P_{0,k}^{(n+\alpha,n+\beta,n+\gamma)}(x) \Bigr),
\end{multline*}
where $P_{0,k}^{(n+\alpha,n+\beta,n+\gamma)}$ is the orthogonal polynomial of degree $k$ for the weight function
$(1+x)^{n+\alpha} x^{n+\beta} (1-x)^{n+\gamma}$ on $[0,1]$, so one needs to take the Mehler-Heine asymptotics for this polynomial into account as well.
This can be done for fixed $k$ but the analysis becomes more difficult when $k$ is allowed to go to infinity as $n \to \infty$. In that case
it is possible that the zeros accumulate in $[-1,a^*] \cup [0,1]$ with $a^*<0$, so that one again deals with a soft edge at $a^*$ and a hard edge at $0$,
so that it is expected to find Mehler-Heine asymptotics around $0$ in terms of the Bessel function $J_\beta$.



\begin{thebibliography}{9}
\bibitem{AptBranWVA} A.I. Aptekarev, A. Branquinho, W. Van Assche, 
\textit{Multiple orthogonal polynomials for classical weights}, 
Trans. Amer. Math. Soc.  \textbf{355}  (2003),  no.~10, 3887--3914.
\bibitem{BenDouak} Y. Ben Cheikh, K. Douak,
\textit{On two-orthogonal polynomials related to the Bateman's $J_n^{u,v}$-function},
Methods Appl. Anal. \textbf{7} (2000), no.~4, 641--662.
\bibitem{ElsWVA1} E. Coussement, W. Van Assche,
\textit{Some properties of multiple orthogonal polynomials associated with Macdonald functions},
J. Comput. Appl. Math. \textbf{133} (2001), no.~1--2, 253--261.
\bibitem{ElsWVA} E. Coussement, W. Van Assche,
\textit{Multiple orthogonal polynomials associated with the modified Bessel functions of the first kind},
Constr. Approx. \textbf{19} (2003), no.~2, 237--263.
\bibitem{ElsWVA2} E. Coussement, W. Van Assche,
\textit{Asymptotics of multiple orthogonal polynomials associated with the modified Bessel functions of the first kind},
J. Comput. Appl. Math. \textbf{153} (2003), no.~1--2, 141--149.
\bibitem{DeiftZhou} P. Deift, X. Zhou,
\textit{A steepest descent method for oscillatory Riemann-Hilbert problems. Asymptotics for the MKdV equation},
Ann. Math. \textbf{137} (1993), no.~2, 295--368.
\bibitem{DeschKuijl1} K. Deschout, A.B.J. Kuijlaars,
\textit{Double scaling limit for modified Jacobi-Angelesco polynomials},
in ``Notions of positivity and the geometry of polynomials",   
Trends Math., Birkh\"auser/Springer Basel AG, Basel, 2011, pp.~115--161.
\bibitem{DeschKuijl2} K. Deschout, A.B.J. Kuijlaars,
\textit{Critical behavior in Angelesco ensembles},
J. Math. Phys. \textbf{53} (2012), 123523 (21 pp.).
\bibitem{Douak} K. Douak,
\textit{On $2$-orthogonal polynomials of Laguerre type},
Int. J. Math. Math. Sci. \textbf{22} (1999), no.~1, 29--48.
\bibitem{Ismail} M.E.H. Ismail,
\textit{Classical and Quantum Orthogonal Polynomials in One
Variable}, Encyclopedia of Mathematics and its Applications
\textbf{98}, Cambridge University Press, 2005 (paperback edition, 2009).
\bibitem{Kuijl} A.B.J. Kuijlaars, 
\textit{Riemann-Hilbert analysis for orthogonal polynomials}, 
in ``Orthogonal polynomials and special functions" (Leuven, 2002), 
Lecture Notes in Math. \textbf{1817}, Springer, Berlin, 2003, pp.~167--210.
\bibitem{Kuijl2} A.B.J. Kuijlaars, 
\textit{Multiple orthogonal polynomial ensembles}, 
in ``Recent trends in orthogonal polynomials and approximation theory", Contemp. Math. \textbf{507}, 
Amer. Math. Soc., Providence, RI, 2010, pp. 155--176.
\bibitem{KMcLVAVL} A.B.J. Kuijlaars, K.T.-R. McLaughlin, W. Van Assche, M. Vanlessen, 
\textit{The Riemann-Hilbert approach to strong asymptotics for orthogonal polynomials on $[-1,1]$}, 
Adv. Math.  \textbf{188}  (2004),  no.~2, 337--398.
\bibitem{KuijlZhang} A.B.J. Kuijlaars, L. Zhang,
\textit{Singular values of products of Ginibre random matrices, multiple orthogonal polynomials and hard edge scaling limits},
Commun. Math. Phys. \textbf{332} (2014), no.~2, 759--781.
\bibitem{NIST} F.W. Olver, D.W. Lozier, R.F. Boisvert, C.W. Clark (Eds.),
\textit{NIST Handbook of Mathematical Functions}, 
National Institute of Standards and Technology and Cambridge University Press, 2010.
\bibitem{Sorokin} V.N. Sorokin,
\textit{A generalization of classical orthogonal polynomials and the convergence of simultaneous Pad\'e approximants},
(Russian. English summary)  Trudy Sem. Petrovsk.  No. 11 (1986), 125--165 (in Russian);  translation in 
J. Soviet Math.  \textbf{45}  (1989),  no.~6, 1461--1499. 
\bibitem{Szego} G. Szeg\H{o},
\textit{Orthogonal Polynomials},
Amer. Math. Soc. Colloq. Publ. \textbf{23}, Providence RI, 1939 (fourth edition 1975).
\bibitem{Takata1} T. Takata, 
\textit{Asymptotic formulae of Mehler-Heine-type for certain classical polyorthogonal polynomials}, 
J. Approx. Theory  \textbf{135} (2005),  no.~2, 160--175.
\bibitem{Takata2} T. Takata,
\textit{Certain multiple orthogonal polynomials and a discretization of the Bessel equation},
J. Math.\ Kyoto Univ. \textbf{49} (2009), no.~4, 747--769.
\bibitem{Tulyakov} D.N. Tulyakov, 
\textit{Difference schemes with power-growth bases perturbed by the spectral parameter}, 
Mat. Sb.  \textbf{200}  (2009),  no.~5, 129--158 (in Russian);  translation in  Sb. Math.  \textbf{200}  (2009),  no. 5--6, 753781.
\bibitem{WVAGerKuijl} W. Van Assche, J.S. Geronimo, A.B.J. Kuijlaars, 
\textit{Riemann-Hilbert problems for multiple orthogonal polynomials},  
in ``Special functions 2000: current perspective and future directions" (Tempe, AZ), NATO Sci. Ser. II Math. Phys. Chem. \textbf{30}, 
Kluwer Acad. Publ., Dordrecht, 2001, pp.~23--59.
\bibitem{WVAYaku} W. Van Assche, S.B. Yakubovich,
\textit{Multiple orthogonal polynomials associated with Macdonald functions},
Integral Transforms Spec. Funct. \textbf{9} (2000), no.~3, 229--244.
\bibitem{Vanlessen} M. Vanlessen, 
\textit{Strong asymptotics of Laguerre-type orthogonal polynomials and applications in random matrix theory}, 
Constr. Approx.  \textbf{25}  (2007),  no.~2, 125--175.
\bibitem{WongZhao} R. Wong, Y.-Q. Zhao,
\textit{Smoothing of Stokes's discontinuity for the generalized Bessel function},
Proc. R. Soc. Lond. A \textbf{455} (1999), 1381--1400.
\bibitem{Wright} E.M. Wright,
\textit{The asymptotic expansion of the generalized Bessel function}, 
Proc. London. Math. Soc. \textbf{38} (1934), 257--270.
\end{thebibliography}
\end{document}